\newtheorem{thm}{Theorem}[section]
\newtheorem{obe}[thm]{Remark}
\author{Fabio Silva Botelho \\ Department of Mathematics \\  Federal University of Santa Catarina, UFSC \\
Florian\'{o}polis, SC - Brazil}
\title{\bf  On duality principles for  one and three-dimensional non-linear models in elasticity}
\begin{document}
\date{}
\maketitle

\abstract{ In this article, we develop  duality principles applicable to primal variational formulations found in the non-linear elasticity theory.
As a first application, we establish the concerning results in details for  one and three-dimensional models. We emphasize such  duality
principles are  applicable to a larger class of variational optimization problems, such as non-linear models of plates and shells  and other
models in elasticity. Finally, we formally prove there is no duality gap between the primal and dual formulations, in a local extremal context.}

\section{Introduction}
In the first part of this  work, we develop a duality principle and sufficient conditions of local optimality for an one-dimensional model in non-linear elasticity.
The results are based on fundamental tools of convex analysis and duality theory.

About the references, this article in some sense extends and complements the original works of Telega, Bielski and their co-workers, \cite{85,2900,10,11}. In particular in \cite{85}, published in 1985
and in \cite{11}, for three-dimensional elasticity and related models, the authors established  duality principles and concerning global optimality conditions, for the special case in which the stress tensor is positive definite at a critical point. In this specific sense, the present work complements such  previous ones, considering we establish a sufficient condition for local minimality which does not require the stress tensor to be either positive or negative defined along the concerning domain. Such an optimality condition is summarized by the condition $\|u_x\|_\infty <1/4$ at a critical point.

The tools of convex analysis and duality  theory here used may be found in \cite{120,6,29}.
Existence of results in non-linear elasticity and related models may be found in \cite{903,3,4}.

Finally, details on the function spaces addressed may be found in \cite{1}.

At this point, we start to describe the primal variational formulation for the one-dimensional model.

Let $\Omega=[0,L] \subset \mathbb{R}$ be an interval which represents the axis of a straight bar of length $L$ and constant cross section area $A$.

We denote by $u:[0,L] \rightarrow \mathbb{R}$ the field of axial displacements for such a bar, resulting from the application of an axial load field
$P \in C([0,L]).$

We also denote $$U=\{u \in C^1([0,L])\;:\; u(0)=u(L)=0\},$$ and
$$\hat{U}=\{u \in U\;:\; \|u_x\|_\infty< 1/4\}.$$

The energy for such a system, denoted by $J:U \rightarrow \mathbb{R}$, is expressed as
$$J(u)=\frac{EA}{2}\int_0^L\left(u_x +\frac{1}{2}u_x^2\right)^2\;dx-\int_0^L Pu\;dx,\; \forall u \in U,$$
where $E>0$ is the Young modulus.

We shall also define
$$G(u_x)=\frac{EA}{2}\int_0^L \left(u_x+\frac{1}{2}u_x^2\right)^2\;dx.$$

Finally, generically we shall denote, for $u \in U$ and $r>0$, $$B_r(u)=\{v \in U\;:\; \|v-u\|_U<r\}$$
where $$\|v\|_U=\max_{x \in [0,L]} \{|v(x)|+|v_x(x)|\},\; \forall v \in U.$$

Moreover, defining $V=C([0,L])$, for $z^* \in V$ and $r_1>0$, we shall generically also denote 
$$B_{r_1}(z^*)=\{v \in V\;:\; \|v-z^*\|_V<r_1\},$$
where
$$\|v\|_V=\|v\|_\infty=\max_{x \in [0,L]} |v(x)|,\; \forall v \in V.$$

Similar corresponding standard notations are valid for $V\times V$ and the 3-dimensional model.
\section{The main duality principle for the one-dimensional model}
Our first duality principle is summarized by the following theorem.

\begin{thm}Let $J:U \rightarrow \mathbb{R}$ be defined by
$$J(u)=\frac{EA}{2}\int_0^L\left(u_x +\frac{1}{2}u_x^2\right)^2\;dx-\int_0^L Pu\;dx.$$

Assume $u_0 \in \hat{U}$ is such that $$\delta J(u_0)=\mathbf{0}.$$

Define $F:U \rightarrow \mathbb{R}$ by $$F(u_x)=\frac{K}{2}\int_0^L u_x^2\;dx,$$

$G_K:U \rightarrow \mathbb{R}$ by
$$G_K(u_x)=G(u_x)+\frac{K}{2}\int_0^L u_x^2\;dx$$
and
$J^*:V \times V \times V \rightarrow \mathbb{R}$ by
$$J^*(v^*,z^*)=F^*(z^*)-G^*_K(v^*,z^*),$$
\begin{eqnarray}F^*(z^*)&=& \sup_{v \in V} \{\langle v,z^*\rangle_{L^2}-F(v)\} \nonumber \\ &=&
\frac{1}{2K} \int_0^L (z^*)^2\;dx\end{eqnarray}
and
\begin{eqnarray} G^*_K(v^*,z^*)&=& \sup_{ (v_1,v_2) \in V \times V} \left\{
\langle v_1,z^*+v_2^*\rangle_{L^2}+\langle v_2,v_1^*\rangle_{L^2} \right.\nonumber \\
&&\left.-\frac{EA}{2}\int_0^L\left(v_1+\frac{1}{2}v_2^2\right)^2\;dx- \frac{K}{2}\int_0^L v_2^2\;dx\right\}
\nonumber \\ &=& \frac{1}{2}\int_0^L \frac{(v_1^*)^2}{z^*+v_2^*+K}\;dx+\frac{1}{2EA}\int_0^L (v_2^*+z^*)^2\;dx,\end{eqnarray}
if $v_2^*+z^*+K>0, \text{ in } \Omega.$

Define also,
$$A^*=\{v^*=(v_1^*,v_2^*) \in V \times V\;:\; (v_1^*)_x+(v_2^*)_x+P=0,\text{ in } \Omega\},$$

$$K=EA/2,$$
$$\hat{z}^*=K (u_0)_x,$$
$$\hat{v}_2^*=EA\left((u_0)_x+\frac{1}{2}(u_0)_x^2\right)-\hat{z}^*,$$
$$\hat{v}_1^*=(\hat{z}^*+\hat{v}_2^*+K)(u_0)_x.$$

Under hypotheses and definitions, we have
\begin{equation}\label{el12}\delta^2 J(u_0,\varphi,\varphi) \geq 0,\; \forall \varphi \in C^1_c((0,L))\end{equation}
and there exist $r,r_1,r_2>0$ such that
\begin{eqnarray}
J(u_0)&=& \inf_{u \in B_r(u_0)} J(u) \nonumber \\ &=& \sup_{v^* \in B_{r_2}(\hat{v}^*)\cap A^*}\left\{\inf_{z^* \in B_{r_1}(\hat{z}^*)}\left\{J^*(v^*,z^*)
\right\}\right\} \nonumber \\ &=& J^*(\hat{v}^*,\hat{z}^*).
\end{eqnarray}
\end{thm}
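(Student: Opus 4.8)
The plan is to proceed in three logically distinct stages, corresponding to the three equalities in the final display.

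First, I would establish the second-variation inequality \eqref{el12}. Writing $g(t)=u_x$ with $u=u_0+\varepsilon\varphi$, a direct computation gives $J(u)=\frac{EA}{2}\int_0^L(u_x+\frac12 u_x^2)^2\,dx-\int_0^L Pu\,dx$, so that $\delta^2J(u_0,\varphi,\varphi)$ is a quadratic form in $\varphi_x$ whose coefficient involves $(u_0)_x$. Carrying out the differentiation, the coefficient turns out to be a polynomial in $(u_0)_x$ that is nonnegative precisely when $|(u_0)_x|$ is small; the threshold $\|u_x\|_\infty<1/4$ is exactly what makes this coefficient nonnegative pointwise, hence $\delta^2J(u_0,\varphi,\varphi)\ge 0$ for all $\varphi\in C^1_c((0,L))$. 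I expect this to be essentially a careful but routine calculation.

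Second, I would set up the convex-analytic duality framework. The key algebraic identity to verify is that $J(u)=G_K(u_x)-F(u_x)=G_K(u_x)-\frac{K}{2}\int_0^L u_x^2\,dx+\frac{K}{2}\int_0^L u_x^2\,dx-\int_0^L Pu\,dx$, i.e. adding and subtracting the quadratic term $\frac{K}{2}\int u_x^2$ does not change $J$. Then, using the Legendre--Fenchel transforms computed in the statement, one has $F(u_x)\ge \langle u_x,z^*\rangle_{L^2}-F^*(z^*)$ and $G_K(u_x)\ge$ (the relevant expression in $v_1^*,v_2^*,z^*$) after integrating by parts against the constraint $(v_1^*)_x+(v_2^*)_x+P=0$ encoded in $A^*$. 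Combining these inequalities yields the weak duality $J(u)\ge J^*(v^*,z^*)$ for all admissible $u$, $v^*\in A^*$, $z^*$, which gives $\inf J\ge \sup\inf J^*$ and in particular $\ge J^*(\hat v^*,\hat z^*)$ once one checks that $\hat v^*\in A^*$ (this follows from $\delta J(u_0)=\mathbf 0$, which is precisely the Euler--Lagrange equation $\big(EA(u_0)_x(1+\frac12(u_0)_x)(1+(u_0)_x)\big)_x+P=0$, rewritten via the definitions of $\hat z^*$, $\hat v_2^*$, $\hat v_1^*$).

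Third, and this is the main obstacle, I would prove the reverse inequalities and the local-extremality claims, i.e. that on suitable balls $B_r(u_0)$, $B_{r_1}(\hat z^*)$, $B_{r_2}(\hat v^*)$ one actually has equality $J(u_0)=\inf_{B_r(u_0)}J(u)=\sup_{B_{r_2}(\hat v^*)\cap A^*}\inf_{B_{r_1}(\hat z^*)}J^*(v^*,z^*)=J^*(\hat v^*,\hat z^*)$. The strategy is a saddle-point argument: one shows that $(\hat v^*,\hat z^*)$ is a critical point of $J^*$ on $A^*\times V$ (by differentiating $J^*$ and checking the first-order conditions reduce to the same Euler--Lagrange equation and to the relations $\hat z^*=K(u_0)_x$, etc.), then that the second variation of $J^*$ is negative semidefinite in $v^*$ and positive semidefinite in $z^*$ in a neighborhood — so $J^*$ has a local saddle structure. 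Because $G_K$ is, by the choice $K=EA/2$, locally convex near $(u_0)_x$ on $\hat U$ (again the $\|u_x\|_\infty<1/4$ hypothesis together with the added quadratic term is what forces this), the Legendre transform relations can be inverted locally, and the standard min-max duality for a locally convex problem applies: the primal value on $B_r(u_0)$ equals the dual value on the corresponding neighborhoods, with both attained at the stated critical points and no duality gap. The delicate point throughout is the bookkeeping of radii $r,r_1,r_2$ — one must choose them small enough that the positivity condition $v_2^*+z^*+K>0$ persists, that $G_K$ stays convex, and that the maps between primal and dual variables remain homeomorphisms; I would handle this by continuity, starting from the strict inequalities that hold at $u_0$ and $(\hat v^*,\hat z^*)$.
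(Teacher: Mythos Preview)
Your outline captures the right three ingredients --- a Fenchel/Toland-type duality framework, a saddle-point verification at $(\hat v^*,\hat z^*,u_0)$, and a local convexity/concavity argument --- and this is essentially the paper's route. Two points, however, need correction or sharpening.

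First, in Stage~2 your pointwise claim ``$J(u)\ge J^*(v^*,z^*)$ for all admissible $u,\,v^*\in A^*,\,z^*$'' does not follow from the Fenchel--Young inequalities as you combine them. Since $J=G_K-F-\langle P,\cdot\rangle$ while $J^*=F^*-G_K^*$, you have $G_K(u_x)\ge(\text{pairing})-G_K^*$ but $-F(u_x)\le F^*-(\text{pairing})$; these point in opposite directions and cannot be added pointwise in $z^*$. What is actually true (and what the paper uses) is the one-sided bound
\[
J^*(v^*,z^*)\ \le\ F^*(z^*)-\langle u_x,z^*\rangle + G_K(u_x)-\langle u,P\rangle,\qquad v^*\in A^*,
\]
obtained from the $G_K^*$ inequality alone together with integration by parts. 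One then chooses the specific $z^*=Ku_x$ (exact Legendre inversion of the quadratic $F$) to collapse the right-hand side to $J(u)$. This yields $J^*(\hat v^*,\hat z^*)=\inf_{z^*\in B_{r_1}}J^*(\hat v^*,z^*)\le J(u)$ for $u\in B_r(u_0)$, which is the inequality the argument needs.

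Second, the heart of the proof is not the primal-side convexity of $G_K$ you invoke in Stage~3, but the explicit dual-side positivity
\[
\frac{\partial^2 J^*(\hat v^*,\hat z^*)}{\partial (z^*)^2}
=\frac{1}{K}-\frac{(\hat v_1^*)^2}{(\hat v_2^*+\hat z^*+K)^3}-\frac{1}{EA}
=\frac{1}{EA}-\frac{(u_0)_x^2}{\hat v_2^*+\hat z^*+\tfrac{EA}{2}}>0,
\]
which is precisely where both hypotheses $K=EA/2$ and $\|(u_0)_x\|_\infty<1/4$ are used quantitatively (the paper bounds the denominator below by $\tfrac{7}{32}EA$ and obtains $\partial^2 J^*/\partial(z^*)^2>\tfrac{5}{7EA}$). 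Without this computation the claim that $J^*$ is locally convex in $z^*$ --- hence that $(\hat v^*,\hat z^*)$ is a genuine saddle and the min-max theorem applies --- is unjustified: $J^*=F^*-G_K^*$ is a difference of two functions convex in $z^*$, so local convexity is not automatic.

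A minor organizational difference: you verify $\delta^2 J(u_0,\varphi,\varphi)\ge 0$ by direct primal computation at the start, whereas the paper obtains it at the end as the standard second-order necessary condition once $u_0$ has been shown to be a local minimizer via the duality. Either route is acceptable.
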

\begin{proof}
Observe that
\begin{eqnarray}\label{el1}\frac{\partial^2 J^*(\hat{v}^*,\hat{z}^*)}{\partial (z^*)^2}&=&  \frac{2}{EA}-\frac{(\hat{v}_1^*)^2}{(\hat{v}_2^*+\hat{z}^*+\frac{EA}{2})^3}
-\frac{1}{EA} \nonumber \\ &=& \frac{1}{EA}-\frac{(u_0)_x^2}{\hat{v}_2^*+\hat{z}^*+\frac{EA}{2}}.
\end{eqnarray}

Also,
\begin{eqnarray} \hat{v}_2^*+\hat{z}^*+\frac{EA}{2}&=& EA\left( (u_0)_x +\frac{1}{2} (u_0)_x^2\right)+\frac{EA}{2}
\nonumber \\ &>& \frac{EA}{2}-EA\left(\frac{1}{4}+\frac{1}{2}\frac{1}{16}\right)
\nonumber \\ &=& EA\left(\frac{1}{4}-\frac{1}{32}\right) \nonumber \\ &=& EA \frac{7}{32}.
\end{eqnarray}

From this and (\ref{el1}), we obtain
\begin{eqnarray}\label{el2}\frac{\partial^2 J^*(\hat{v}^*,\hat{z}^*)}{\partial (z^*)^2}&>& \frac{1}{EA}-\frac{1}{16} \frac{32}{(7\;EA)}
\nonumber \\ &=& \frac{1}{EA}\left(1-\frac{2}{7}\right) \nonumber \\ &=& \frac{5}{7 EA} \nonumber \\ &>& 0, \text{ in } \Omega.
\end{eqnarray}

Thus, we may infer that there exists $r_1,r_2>0$ such that $J^*(v^*,z^*)$ is convex in $z^*$ and concave in $v^*$, on
$$B_{r_1}(\hat{z}^*) \times B_{r_2}(\hat{v}^*).$$

Now, denoting $$\hat{J}(v^*,z^*,u)=J^*(v^*,z^*)-\langle u,(v_1^*)_x+(v_2^*)_x+P \rangle_{L^2}$$ we obtain
\begin{eqnarray}
\frac{\partial \hat{J}^*(\hat{v}^*, \hat{z}^*, u_0)}{\partial z^*}&=& \frac{z^*}{K}+\frac{1}{2} \frac{(\hat{v}_1^*)^2}{(\hat{v}_2^*+\hat{z}^*+K)^2}-\frac{\hat{v}_2^*+\hat{z}^*}{EA} \nonumber \\ &=&
(u_0)_x +\frac{1}{2} (u_0)_x^2-\frac{EA\left((u_0)_x +\frac{1}{2} (u_0)_x^2\right)}{EA} \nonumber \\ &=& 0, \text{ in } \Omega.
\end{eqnarray}

Also,
\begin{eqnarray}
\frac{\partial \hat{J}^*(\hat{v}^*, \hat{z}^*, u_0)}{\partial v_1^*}&=& -\frac{(\hat{v}_1^*)}{(\hat{v}_2^*+\hat{z}^*+K)}+(u_0)_x
\nonumber \\ &=& 0, \text{ in } \Omega,
\end{eqnarray}
and
\begin{eqnarray}
\frac{\partial \hat{J}^*(\hat{v}^*, \hat{z}^*, u_0)}{\partial v_2^*}&=& \frac{1}{2} \frac{(v_1^*)^2}{(\hat{v}_2^*+\hat{z}^*+K)^2}-\frac{\hat{v}_2^*+\hat{z}^*}{EA} +(u_0)_x\nonumber \\ &=&
(u_0)_x +\frac{1}{2} (u_0)_x^2-\frac{EA\left((u_0)_x +\frac{1}{2} (u_0)_x^2\right)}{EA} \nonumber \\ &=& 0, \text{ in } \Omega.
\end{eqnarray}

Finally,
\begin{eqnarray}
\frac{\partial \hat{J}^*(\hat{v}^*, \hat{z}^*, u_0)}{\partial u}&=& -(\hat{v}_1^*)_x-(\hat{v}_2^*)_x-P \nonumber \\ &=& \delta J(u_0)
\nonumber \\ &=& \mathbf{0}, \text{ in } \Omega.
\end{eqnarray}

These last four results may be summarized by the equation
$$\delta \hat{J}^*(\hat{v}^*,\hat{z}^*,u_0)=\mathbf{0}.$$
Since above we have obtained that $J^*(v^*,z^*)$ is convex in $z^*$ and concave in $v^*$, on
$$B_{r_1}(\hat{z}^*) \times B_{r_2}(\hat{v}^*),$$  from this, the last result and from the min-max theorem, we have

\begin{eqnarray}\label{el10a} J^*(\hat{v}^*,\hat{z}^*)&=& \hat{J}^*(\hat{v}^*,\hat{z}^*,u_0) \nonumber \\ &=& \sup_{v^* \in B_{r_2}(\hat{v}^*)}\left\{\inf_{z^* \in B_{r_1}(\hat{z}^*)}\left\{\hat{J}^*(v^*,z^*,u_0)\right\}\right\} \nonumber \\ &=& \sup_{v^* \in B_{r_2}(\hat{v}^*) \cap A^*}\left\{\inf_{z^* \in B_{r_1}(\hat{z}^*)} \hat{J}^*(v^*,z^*)\right\}.
\end{eqnarray}

At this point we observe that
\begin{eqnarray}\label{el8}
J^*(\hat{v}^*,\hat{z}^*)&=& \hat{J}^*(\hat{v}^*,\hat{z}^*,u_0) \nonumber \\ &=& F^*(\hat{z}^*)-G^*_K(\hat{v}^*,\hat{z}^*) \nonumber \\ &&+\langle (u_0)_x, \hat{v}_1^*+\hat{v}_2^* \rangle_{L^2}-\langle u_0,P\rangle_{L^2}
\nonumber \\ &=&\frac{K}{2} \int_0^L (u_0)_x^2\;dx-\langle (u_0)_x,\hat{z}^* \rangle_{L^2}
\nonumber \\ &&-\langle (u_0)_x, \hat{v}_1^*+\hat{v}_2^*+\hat{z}^* \rangle_{L^2} \nonumber \\ &&+G((u_0)_x)+\frac{K}{2} \int_0^L(u_0)_x^2\;dx
\nonumber \\ &&+\langle (u_0)_x, \hat{v}_1^*+\hat{v}_2^* \rangle_{L^2}-\langle u_0,P\rangle_{L^2} \nonumber \\ &=& G((u_0)_x)-\langle u_0,P\rangle_{L^2}
\nonumber \\ &=& J(u_0).
\end{eqnarray}
On the other hand, since $\hat{v}^* \in A^*$, we may write
\begin{eqnarray}\label{el5} J^*(\hat{v}^*,\hat{z}^*)&=& \hat{J}^*(\hat{v}^*,\hat{z}^*) \nonumber \\  &=&\inf_{z^* \in B_{r_1}(\hat{z}^*)} \hat{J}^*(\hat{v}^*,z^*)
\nonumber \\ &\leq& F^*(z^*)-G^*_K(\hat{v}^*,z^*) \nonumber \\ &&+\langle (u)_x, \hat{v}_1^*+\hat{v}_2^* \rangle_{L^2}-\langle u,P\rangle_{L^2}
\nonumber \\ &\leq& F^*(z^*)-\langle (u)_x, \hat{v}_1^*+\hat{v}_2^*+z^* \rangle_{L^2} \nonumber \\ &&+G(u_x)+K \int_0^L \frac{u_x^2}{2}\;dx
\nonumber \\ &&+\langle (u)_x, \hat{v}_1^*+\hat{v}_2^* \rangle_{L^2}-\langle u_,P\rangle_{L^2},
\end{eqnarray}
$\forall u \in U,\;z^* \in B_{r_1}(\hat{z}^*)$.

In particular, there exists $r>0$ such that if $u \in B_r(u_0)$ then $z^*=Ku_x \in B_{r_1}(\hat{z}^*)$, so that from this and (\ref{el5}), we obtain
\begin{eqnarray}\label{el7} J^*(\hat{v}^*,\hat{z}^*)&\leq& - K \int_0^L \frac{u_x^2}{2}\;dx+G(u_x)+K \int_0^L \frac{u_x^2}{2}\;dx-\langle u,P\rangle_{L^2}
\nonumber \\ &=& G(u_x)-\langle u,P\rangle_{L^2} \nonumber \\ &=& J(u),\; \forall u \in B_r(u_0).\end{eqnarray}

Finally, from (\ref{el10a}), (\ref{el8}), and  (\ref{el7}), we may infer that
\begin{eqnarray}
J(u_0)&=& \inf_{u \in B_r(u_0)} J(u) \nonumber \\ &=& \sup_{v^* \in B_{r_2}(\hat{v}^*)\cap A^*}\left\{\inf_{z^* \in B_{r_1}(\hat{z}^*)}\left\{J^*(v^*,z^*)
\right\}\right\} \nonumber \\ &=& J^*(\hat{v}^*,\hat{z}^*).
\end{eqnarray}

From the first equation in such a result we may also obtain the standard second order necessary condition indicated in (\ref{el12}).

The proof is complete.
\end{proof}

\section{ The primal variational formulation for the three-dimensional model}
At this point we start to describe the primal formulation for the three-dimensional model.

Consider $\Omega \subset \mathbb{R}^3$ an open, bounded, connected set,
which represents the reference volume of an elastic solid
under the loads $f \in C(\Omega;\mathbb{R}^3)$ and the boundary loads $\hat{f} \in C(\Gamma;\mathbb{R}^3)$, where $\Gamma$  denotes
the boundary of $\Omega$. The field of displacements resulting from the actions
of $f$  and $\hat{f}$ is denoted by $u \equiv (u_1,u_2,u_3) \in U$, where
$u_1,u_2,$ and $u_3$ denotes the displacements relating the
directions $x, y,$ and $z$ respectively, in the cartesian system
$(x,y,z)$.

Here $U$ is defined by
\begin{equation}
U=\{u=(u_1,u_2,u_3) \in C^1(\overline{\Omega};\mathbb{R}^3) \; | \;
u=(0,0,0)\equiv \mathbf{0}  \text{ on } \Gamma_0\}
\end{equation}
and $\Gamma=\Gamma_0 \cup \Gamma_1$, $\Gamma_0 \cap \Gamma_1=
\emptyset$. We assume $|\Gamma_0|>0$ where $|\Gamma_0|$ denotes the Lebesgue measure of $\Gamma_0.$

The stress tensor is denoted by $\{\sigma_{ij}\}$, where
\begin{gather}
\sigma_{ij}=H_{ijkl}\left(\frac{1}{2}(u_{k,l}+u_{l,k}
+u_{m,k}u_{m,l})\right),
\end{gather}
 $$\{H_{ijkl}\}=\{\lambda \delta_{ij} \delta_{kl}+\mu(\delta_{ik}\delta_{jl}+\delta_{il}\delta_{jk})\},$$ $\{\delta_{ij}\}$ is the Kronecker delta and $\lambda,\mu>0$ are the Lam\'{e} constants (we assume they are such that $\{H_{ijkl}\}$ is a symmetric constant  positive definite forth order tensor).
 Here, $i,j,k,l \in \{1,2,3\}.$

 The boundary value form of the non-linear
elasticity model is given by
\begin{gather}\label{9.9.10.1}
\left \{
\begin{array}{ll}
 \sigma_{ij,j}+(\sigma_{mj}u_{i,m})_{,j}+f_i=0, &  \text{ in } \Omega,
 \\
  u= \mathbf{0}, & \text{ on }\Gamma_0,
\\
\sigma_{ij}n_j+\sigma_{mj}u_{i,m}n_j=\hat{f}_i, & \text{ on } \Gamma_1,
  \end{array} \right.\end{gather}
 where $\textbf{n}=(n_1,n_2,n_3)$ denotes the outward normal to the surface $\Gamma.$

The corresponding primal variational formulation is represented by
$J:U \rightarrow \mathbb{R}$, where
\begin{eqnarray}
J(u)&=&\frac{1}{2}\int_\Omega H_{ijkl}\left(\frac{1}{2}(u_{i,j}+u_{j,i}
+u_{m,i}u_{m,j})\right)\left(\frac{1}{2}(u_{k,l}+u_{l,k}+
u_{m,k}u_{m,l})\right)\;dx\nonumber \\ &&-\langle u,f \rangle_{L^2(\Omega;\mathbb{R}^3)}-\int_{\Gamma_1} \hat{f}_i u_i \;d\Gamma
\end{eqnarray}
where $$\langle u,f \rangle_{L^2(\Omega;\mathbb{R}^3)}=\int_\Omega f_i u_i \;dx.$$
\begin{obe} By a regular Lipschitzian boundary $\Gamma$ of $\Omega$ we mean regularity enough so that the standard Gauss-Green formulas of integrations by parts to hold.
Also, we denote by $\mathbf{0}$ the zero vector in  appropriate function spaces.

About the references, similarly as for the one-dimensional case, we refer to \cite{10,11,2900,85} as the first articles to deal with the convex analysis approach applied to non-convex and non-linear mechanics models. Indeed, the present work complements such important original publications, since in these previous results the complementary energy is established as a perfect duality principle for the case of positive definiteness of the stress tensor (or the membrane force tensor, for plates and shells models) at a critical point.

We have relaxed such constraints, allowing to some extent, the stress tensor to not  be necessarily either positive or negative definite in $\Omega$.
Similar problems and models are addressed in \cite{120}.

Moreover, we highlight again that existence results for models in elasticity are addressed in \cite{903,3,4}. Finally, the standard tools of convex analysis here used may be found in
\cite{6,12,29,120}.
\end{obe}

\section{The main duality principle for the three-dimensional model}

In this section we present the main duality principle for the 3-Dimensional model.

The main result is summarized by the following theorem.
\begin{thm}Let $J:U \rightarrow \mathbb{R}$ be defined by
\begin{eqnarray}J(u)&=&\int_\Omega H_{ijkl}\left(\frac{1}{2}(u_{i,j}+u_{j,i}+ u_{m,i}u_{m,j})\right)\left(\frac{1}{2}(u_{k,l}+u_{k,l}+ u_{q,k}u_{q,l})\right) \;dx \nonumber \\ &&-\langle u_if_i \rangle_{L^2(\Omega)}-\langle u_i, \hat{f}_i \rangle_{L^2(\Gamma_1)}.\end{eqnarray}

Assume $u_0 \in \hat{U}$ is such that $$\delta J(u_0)=\mathbf{0},$$
where $$\hat{U}=\{u \in U\;:\; \|u_{i,j}\|_\infty< 1/8,\; \forall i,j \in \{1,2,3\}\}.$$

Define $F:U \rightarrow \mathbb{R}$ by $$F(u_{i,j})=\frac{K}{2}\int_\Omega \left(\frac{u_{i,j}+u_{j,i}}{2}\right)\left(\frac{u_{i,j}+u_{j,i}}{2}\right)  \;dx,$$

$G_K:U \rightarrow \mathbb{R}$ by
$$G_K(\{u_{i,j}\})=G(u)+\frac{K}{2}\int_\Omega \left(\frac{u_{i,j}+u_{j,i}}{2}\right)\left(\frac{u_{i,j}+u_{j,i}}{2}\right)  \;dx $$
where $$G(u)=\int_\Omega H_{ijkl}\left(\frac{1}{2}(u_{i,j}+u_{j,i}+ u_{m,i}u_{m,j})\right)\left(\frac{1}{2}(u_{k,l}+u_{k,l}+ u_{q,k}u_{q,l})\right) \;dx,$$
and
$J^*:V \times V \times V \rightarrow \mathbb{R}$ by
$$J^*(v^*,z^*)=F^*(z^*)-G^*_K(v^*,z^*),$$
where
$$V=C(\overline{\Omega};\mathbb{R}^{3 \times 3}),$$
\begin{eqnarray}F^*(z^*)&=& \sup_{v \in V} \{\langle v_{ij},z^*_{ij}\rangle_{L^2}-F(v)\} \nonumber \\ &=&
\frac{1}{2K} \int_\Omega z^*_{ij}z^*_{ij}\;dx\end{eqnarray}
and
\begin{eqnarray} G^*_K(v^*,z^*)&=& \sup_{ (v_1,v_2) \in V \times V} \left\{
\langle (v_1)_{ij},z^*_{ij}+(v_2^*)_{ij}\rangle_{L^2}+\langle (v_2)_{ij},(v_1^*)_{ij}\rangle_{L^2} \right.\nonumber \\
&&-\frac{1}{2}\int_\Omega H_{ijkl}\left((v_1)_{ij}+\frac{1}{2}(v_2)_{mi}(v_2)_{mj}\right)\left((v_1)_{kl}+\frac{1}{2}(v_2)_{qk}(v_2)_{ql}\right)\;dx
\nonumber \\ &&\left.- \frac{K}{2}\int_\Omega (v_2)_{ij} (v_2)_{ij}\;dx\right\}
\nonumber \\ &=& \frac{1}{2}\int_\Omega \overline{((v_2^*)_{ij}+(z^*)_{ij}+K \delta_{ij})} (v_1^*)_{mi}(v_1^*)_{mj};dx
\nonumber \\ &&+\frac{1}{2}\int_\Omega \overline{H}_{ijkl} ((v_2^*)_{ij}+z^*_{ij})((v_2^*)_{kl}+z^*_{kl})\;dx,\end{eqnarray}
if $\{(v_2^*)_{ij}+(z^*)_{ij}+K \delta_{ij}\} \text{ is positive definite in } \Omega.$

Here $$\{\overline{(v_2^*)_{ij}+(z^*)_{ij}+K \delta_{ij}}\}=\{(v_2^*)_{ij}+(z^*)_{ij}+K \delta_{ij}\}^{-1}.$$

Define also, $A^*=A_1 \cap A_2$, where
$$A_1=\{v^*=(v_1^*,v_2^*) \in V \times V\;:\; (v_1^*)_{ij,j}+(v_2^*)_{ij,j}+f_i=0,\text{ in } \Omega\},$$
and
$$A_2=\{v^*=(v_1^*,v_2^*) \in V \times V\;:\; (v_1^*)_{ij}n_j+(v_2^*)_{ij}n_j-\hat{f}_i=0,\text{ on } \Gamma_t\},$$
and let $K>0$ be such that
$$M=\left\{ \frac{ D_{ijkl}}{K}-\frac{3}{32K}\delta_{ij}-\overline{H}_{ijkl}\right\}$$ is a positive definite tensor,
where \begin{equation}D_{ijkl}=\left\{\begin{array}{ll}
1,& \text{ if } i=k \text{ and } j=l, \\
0,& \text{ otherwise }\end{array} \right. \end{equation}
and, in an appropriate sense,
$$\{\overline{H}_{ijkl}\}=\{H_{ijkl}\}^{-1}.$$

Assume also,
$$\hat{z}^*_{ij}=K \left(\frac{(u_0)_{i,j}+(u_0)_{j,i}}{2}\right),$$
$$(\hat{v}_2^*)_{ij}=H_{ijkl}\left(\frac{(u_0)_{k,l}+(u_0)_{l,k}}{2}+\frac{1}{2}(u_0)_{m,k}(u_0)_{m,l}\right)-(\hat{z}^*)_{ij},$$
$$(\hat{v}_1^*)_{ij}=((\hat{z}^*)_{im}+(\hat{v}_2^*)_{im}+K\delta_{im})(u_0)_{m,j},$$
$\forall i,j \in \{1,2,3\}$ and $K>0$ is also such that
$$\{(\hat{v}_2^*)_{ij}+(\hat{z}^*)_{ij}+K \delta_{ij}\} \geq K\{\delta_{ij}\}/2$$

Under hypotheses and definitions,
 there exist $r,r_1,r_2>0$ such that
\begin{eqnarray}
J(u_0)&=& \inf_{u \in B_r(u_0)} J(u) \nonumber \\ &=& \sup_{v^* \in B_{r_2}(\hat{v}^*)\cap A^*}\left\{\inf_{z^* \in B_{r_1}(\hat{z}^*)}\left\{J^*(v^*,z^*)
\right\}\right\} \nonumber \\ &=& J^*(\hat{v}^*,\hat{z}^*).
\end{eqnarray}
\end{thm}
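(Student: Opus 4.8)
The plan is to follow the architecture of the one-dimensional proof, replacing scalar quantities by the corresponding tensorial ones and replacing the scalar positivity conditions by positive-definiteness conditions. The core idea is to introduce the auxiliary Lagrangian
\begin{equation}
\hat{J}(v^*,z^*,u)=J^*(v^*,z^*)-\langle u_i, (v_1^*)_{ij,j}+(v_2^*)_{ij,j}+f_i\rangle_{L^2(\Omega)}-\langle u_i, (v_1^*)_{ij}n_j+(v_2^*)_{ij}n_j-\hat{f}_i\rangle_{L^2(\Gamma_1)},
\end{equation}
so that on $A^*=A_1\cap A_2$ one has $\hat{J}(v^*,z^*,u)=J^*(v^*,z^*)$, and then to verify, by direct differentiation, that $\delta\hat{J}(\hat{v}^*,\hat{z}^*,u_0)=\mathbf{0}$. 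First I would compute $\partial\hat{J}/\partial z^*$, $\partial\hat{J}/\partial v_1^*$, $\partial\hat{J}/\partial v_2^*$ at $(\hat{v}^*,\hat{z}^*,u_0)$; each should vanish in $\Omega$ because of the defining relations for $\hat{z}^*_{ij}$, $(\hat{v}_2^*)_{ij}$, $(\hat{v}_1^*)_{ij}$ — these are precisely the stationarity conditions of the Legendre-type transforms defining $F^*$ and $G_K^*$. The derivative $\partial\hat{J}/\partial u$ should reproduce the Euler--Lagrange system plus natural boundary conditions in (\ref{9.9.10.1}), hence equal $\delta J(u_0)=\mathbf{0}$ by hypothesis.

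Next I would establish the local saddle structure. From the form of $F^*$ (quadratic, coefficient $1/K$) and $G_K^*$, I would compute the second variation of $J^*$ with respect to $z^*$ at $(\hat{v}^*,\hat{z}^*)$; the $z^*$-Hessian is governed by the tensor $\{D_{ijkl}/K - \overline{H}_{ijkl}\}$ minus a term controlled by $(u_0)_{i,j}$ and by $\{(\hat{v}_2^*)_{ij}+(\hat{z}^*)_{ij}+K\delta_{ij}\}^{-1}$. Using the hypothesis $\|(u_0)_{i,j}\|_\infty<1/8$ one bounds the offending term so that, together with the positive-definiteness of $M$ and the lower bound $\{(\hat{v}_2^*)_{ij}+(\hat{z}^*)_{ij}+K\delta_{ij}\}\geq K\{\delta_{ij}\}/2$, the $z^*$-Hessian is positive definite on $\Omega$ — this is the tensorial analogue of the estimate $5/(7EA)>0$ from the one-dimensional case. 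Similarly, concavity in $v^*=(v_1^*,v_2^*)$ near $\hat{v}^*$ follows because $G_K^*$ is, in the relevant range, a sum of a term convex in $v_1^*$ (division by a positive-definite tensor) and a term convex in $v_2^*+z^*$, so $-G_K^*$ is locally concave; hence there exist $r_1,r_2>0$ such that $J^*$ is convex in $z^*$ on $B_{r_1}(\hat{z}^*)$ and concave in $v^*$ on $B_{r_2}(\hat{v}^*)$. The stationarity of $\hat{J}$ plus this local convexity--concavity, via the min--max (saddle-point) theorem, yields
\begin{eqnarray}
J^*(\hat{v}^*,\hat{z}^*)&=&\hat{J}(\hat{v}^*,\hat{z}^*,u_0)\nonumber\\
&=&\sup_{v^*\in B_{r_2}(\hat{v}^*)}\inf_{z^*\in B_{r_1}(\hat{z}^*)}\hat{J}(v^*,z^*,u_0)\nonumber\\
&=&\sup_{v^*\in B_{r_2}(\hat{v}^*)\cap A^*}\inf_{z^*\in B_{r_1}(\hat{z}^*)}J^*(v^*,z^*).
\end{eqnarray}

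Then I would identify the values and obtain the lower bound for the primal, exactly paralleling (\ref{el8}) and (\ref{el5})--(\ref{el7}). Substituting the explicit $\hat{z}^*,\hat{v}_1^*,\hat{v}_2^*$ into $F^*(\hat{z}^*)-G_K^*(\hat{v}^*,\hat{z}^*)$ and collecting the boundary/interior pairing terms, the added $\frac{K}{2}$-terms cancel and one is left with $G((u_0)) - \langle (u_0)_i,f_i\rangle - \langle (u_0)_i,\hat{f}_i\rangle_{\Gamma_1}=J(u_0)$, so $J^*(\hat{v}^*,\hat{z}^*)=J(u_0)$. For the reverse inequality: for $v^*=\hat{v}^*\in A^*$ and any $u\in U$, the Legendre--Young inequality applied to $F$ and to $G_K$ gives $\hat{J}(\hat{v}^*,z^*,u)\leq F^*(z^*)-\langle u_{i,j},\hat{v}^*_{ij}+z^*_{ij}\rangle + G(u)+\frac{K}{2}\int_\Omega(\cdots)$ type estimate; choosing $z^*_{ij}=K(u_{i,j}+u_{j,i})/2$, which lies in $B_{r_1}(\hat{z}^*)$ provided $u\in B_r(u_0)$ for a suitable $r>0$ (by continuity), the $F^*$ term cancels the $\frac{K}{2}$-term and one gets $J^*(\hat{v}^*,\hat{z}^*)\leq J(u)$ for all $u\in B_r(u_0)$. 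Combining, $J(u_0)=\inf_{u\in B_r(u_0)}J(u)$ and the chain of equalities in the statement follows.

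The main obstacle I anticipate is the second-variation/positive-definiteness estimate for the $z^*$-block of $J^*$: one must control the term involving $(\hat{v}_1^*)_{mi}(\hat{v}_1^*)_{mj}$ divided by the cube of the (tensorial) matrix $\{(\hat{v}_2^*)_{ij}+(\hat{z}^*)_{ij}+K\delta_{ij}\}$, handle the noncommutativity of matrix inverses carefully (differentiating $A\mapsto A^{-1}$ in the tensor sense), and show that the smallness hypothesis $\|(u_0)_{i,j}\|_\infty<1/8$ together with the two conditions on $K$ suffices to absorb it — this is where the constant $3/32$ in the definition of $M$ comes from, and verifying that bookkeeping rigorously (as opposed to the clean scalar computation leading to $7/32$ and $5/7$ in the 1D case) is the delicate point. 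A secondary technical issue is justifying the interchange of $\sup$ and $\inf$ and the restriction to $A^*$ inside the min--max: one needs the perturbation variable $u$ to range over a space in which the constraint manifold $A^*$ is handled correctly, and the continuity argument giving the radius $r$ must be compatible with the $C^1(\overline{\Omega};\mathbb{R}^3)$ norm used in $B_r(u_0)$.
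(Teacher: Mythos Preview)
Your proposal is correct and follows essentially the same route as the paper's own proof: introduce the Lagrangian $\hat{J}$, verify stationarity in $z^*,v_1^*,v_2^*,u$ at $(\hat{v}^*,\hat{z}^*,u_0)$, use the smallness condition $\|(u_0)_{i,j}\|_\infty<1/8$ together with the lower bound on $\{(\hat{v}_2^*)_{ij}+(\hat{z}^*)_{ij}+K\delta_{ij}\}$ and the positive-definiteness of $M$ to get local convexity in $z^*$ and concavity in $v^*$, apply the min--max theorem, identify $J^*(\hat{v}^*,\hat{z}^*)=J(u_0)$, and close with the Legendre--Young inequality plus the choice $z^*_{ij}=K(u_{i,j}+u_{j,i})/2$. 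The only cosmetic discrepancy is the sign you put on the $\Gamma_1$ term in $\hat{J}$, which the paper takes with the opposite sign; either convention works provided it is carried consistently through the integration by parts.
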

\begin{proof}
Observe that, denoting $$\{\overline{\overline{(\hat{v}_2^*)_{ij}+\hat{z}^*_{ij}+K \delta_{ij}}}\}=\{(\hat{v}_2^*)_{ij}+\hat{z}^*_{ij}+K \delta_{ij}\}^{-2}$$ and
$$\{\overline{\overline{\overline{(\hat{v}_2^*)_{ij}+\hat{z}^*_{ij}+K \delta_{ij}}}}\}=\{(\hat{v}_2^*)_{ij}+\hat{z}^*_{ij}+K \delta_{ij}\}^{-3}$$
we have
\begin{eqnarray}\label{el10}\left\{\frac{\partial^2 J^*(\hat{v}^*,\hat{z}^*)}{\partial z^*_{ij} \partial z^*_{kl}}\right\}&=&  \left\{\frac{D_{ijkl}}{K}-(\overline{\overline{\overline{(\hat{v}_2^*)_{ij}+(\hat{z}^*)_{ij}+K \delta_{ij}}}})(\hat{v}_1^*)_{mk}(\hat{v}_1^*)_{ml}
-\overline{H}_{ijkl}\right\} \nonumber \\ &=& \left\{\frac{D_{ijkl}}{K}-(\overline{(\hat{v}_2^*)_{ij}+(\hat{z}^*)_{ij}+K \delta_{ij}})(u_0)_{mk}(u_0)_{ml}
-\overline{H}_{ijkl}\right\} \nonumber \\ &\geq& \left\{\frac{D_{ijkl}}{K}-\frac{3}{32 K} \delta_{ij}
-\overline{H}_{ijkl}\right\} \nonumber \\ &>& \mathbf{0}.
\end{eqnarray}

Thus, there exist $r_1,r_2>0$ such that $J^*(v^*,z^*)$ is convex in $z^*$ and concave in $v^*$ on
$$B_{r_1}(\hat{z}^*) \times B_{r_2}(\hat{v}^*).$$

Now, denoting \begin{eqnarray}\hat{J}(v^*,z^*,u)&=&J^*(v^*,z^*)-\langle u,(v_1^*)_{ij,j}+(v_2^*)_{ij,j}
+f_i \rangle_{L^2(\Omega)}\nonumber \\ &&+\langle u,(v_1^*)_{ij}n_j+(v_2^*)_{ij}n_j-\hat{f}_i \rangle_{L^2(\Gamma_t)}\end{eqnarray}
 we obtain
\begin{eqnarray}
\frac{\partial \hat{J}^*(\hat{v}^*, \hat{z}^*, u_0)}{\partial (z^*)_{ij}}&=& \frac{z^*_{ij}}{K}+\frac{1}{2} \overline{\overline{(\hat{v}_2^*)_{ij}+(\hat{z}^*)_{ij}+K \delta_{ij}}})(\hat{v}_1^*)_{mi}(\hat{v}_1^*)_{mj} -\overline{H}_{ijkl}((\hat{v}_2^*)_{kl}+z^*_{kl})
\nonumber \\ &=&\frac{(u_0)_{i,j}+(u_0)_{j,i}}{2} +\frac{1}{2}(u_0)_{mi}(u_0)_{mj} -\overline{H}_{ijkl}((\hat{v}_2^*)_{kl}+z^*_{kl}) \nonumber \\
&=& 0, \text{ in } \Omega.
\end{eqnarray}

Also,
\begin{eqnarray}
\frac{\partial \hat{J}^*(\hat{v}^*, \hat{z}^*, u_0)}{\partial (v_1^*)_{ij}}&=& -\overline{(\hat{v}_2^*)_{im}+(\hat{z}^*)_{im}+K \delta_{im})}(\hat{v}_1^*)_{mj}
+(u_0)_{i,j}\nonumber \\ &=& 0, \text{ in } \Omega,
\end{eqnarray}
and
\begin{eqnarray}
\frac{\partial \hat{J}^*(\hat{v}^*, \hat{z}^*, u_0)}{\partial (v_2^*)_{ij}}&=& \frac{1}{2} \overline{\overline{(\hat{v}_2^*)_{ij}+(\hat{z}^*)_{ij}+K \delta_{ij}}})(\hat{v}_1^*)_{mi}(\hat{v}_1^*)_{mj} -\overline{H}_{ijkl}((\hat{v}_2^*)_{kl}+\hat{z}^*_{kl})
\nonumber \\ &&+\frac{(u_0)_{i,j}+(u_0)_{j,i}}{2}
\nonumber \\ &=&\frac{(u_0)_{i,j}+(u_0)_{j,i}}{2} +\frac{1}{2}(u_0)_{mi}(u_0)_{mj} -\overline{H}_{ijkl}((\hat{v}_2^*)_{kl}+\hat{z}^*_{kl}) \nonumber \\
&=& 0, \text{ in } \Omega.
\end{eqnarray}

Finally,
\begin{eqnarray}
\frac{\partial \hat{J}^*(\hat{v}^*, \hat{z}^*, u_0)}{\partial u}&=& \left \{
\begin{array}{ll}
 -(\hat{v}_1^*)_{ij,j}-(\hat{v}_2^*)_{ij,j}-f_i, &  \text{ in } \Omega,
 \\
 (\hat{v}_1^*)_{ij}n_j+(\hat{v}_2^*)_{ij}n_j-\hat{f}_i, & \text{ on } \Gamma_1,
  \end{array} \right.
\end{eqnarray}
Hence, $$\frac{\partial \hat{J}^*(\hat{v}^*, \hat{z}^*, u_0)}{\partial u}=\delta J(u_0)=\mathbf{0}.$$

These last four results may be summarized by the equation
$$\delta \hat{J}^*(\hat{v}^*,\hat{z}^*,u_0)=\mathbf{0}.$$
Since above we have obtained that $J^*(v^*,z^*)$ is convex in $z^*$ and concave in $v^*$, on
$$B_{r_1}(\hat{z}^*) \times B_{r_2}(\hat{v}^*),$$  from this, the last result and from the min-max theorem, we have

\begin{eqnarray}\label{el100} J^*(\hat{v}^*,\hat{z}^*)&=& \hat{J}^*(\hat{v}^*,\hat{z}^*,u_0) \nonumber \\ &=& \sup_{v^* \in B_{r_2}(\hat{v}^*)}\left\{\inf_{z^* \in B_{r_1}(\hat{z}^*)}\left\{\hat{J}^*(v^*,z^*)\right\}\right\} \nonumber \\ &=& \sup_{v^* \in B_{r_2}(\hat{v}^*) \cap A^*}\left\{\inf_{z^* \in B_{r_1}(\hat{z}^*)} \hat{J}^*(v^*,z^*)\right\}.
\end{eqnarray}

At this point we observe that
\begin{eqnarray}\label{el80}
J^*(\hat{v}^*,\hat{z}^*)&=& \hat{J}^*(\hat{v}^*,\hat{z}^*,u_0) \nonumber \\ &=& F^*(\hat{z}^*)-G^*_K(\hat{v}^*,\hat{z}^*) \nonumber \\ &&+\langle (u_0)_{i,j}, (\hat{v}_1^*)_{ij}+(\hat{v}_2^*)_{ij} \rangle_{L^2}-\langle (u_0)_i,f_i\rangle_{L^2(\Omega)}-\langle (u_0)_i,\hat{f}_i\rangle_{L^2(\Gamma_1)}
\nonumber \\ &=&-F(\{(u_0)_{i,j}\})+\langle (u_0)_{i,j},(\hat{z}^*)_{ij} \rangle_{L^2}
\nonumber \\ &&-\langle (u_0)_{i,j}, (\hat{v}_1^*)_{ij}+(\hat{v}_2^*)_{ij}+(\hat{z}^*)_{ij} \rangle_{L^2} \nonumber \\ &&+G((u_0))+\frac{K}{2}\int_\Omega \left(\frac{(u_0)_{i,j}+(u_0)_{j,i}}{2}\right)\left(\frac{(u_0)_{i,j}+(u_0)_{j,i}}{2}\right)  \;dx
\nonumber \\ &&+\langle (u_0)_{i,j}, (\hat{v}_1^*)_{ij}+(\hat{v}_2^*)_{ij} \rangle_{L^2}-\langle (u_0)_i,f_i\rangle_{L^2(\Omega)}-\langle (u_0)_i,\hat{f}_i\rangle_{L^2(\Gamma_1)} \nonumber \\ &=& G((u_0))-\langle (u_0)_i,f_i\rangle_{L^2(\Omega)}-\langle (u_0)_i,\hat{f}_i\rangle_{L^2(\Gamma_1)}
\nonumber \\ &=& J(u_0).
\end{eqnarray}
On the other hand, since $\hat{v}^* \in A^*$, we may write
\begin{eqnarray}\label{el50} J^*(\hat{v}^*,\hat{z}^*)&=& \inf_{z^* \in B_{r_1}(\hat{z}^*)} \hat{J}^*(\hat{v}^*,z^*)
\nonumber \\ &\leq& F^*(z^*)-G^*_K(\hat{v}^*,z^*) \nonumber \\ &&+\langle (u)_{i,j}, (\hat{v}_1^*)_{ij}+(\hat{v}_2^*)_{ij} \rangle_{L^2}
-\langle u_i,f_i\rangle_{L^2(\Omega)}-\langle u_i,\hat{f}_i\rangle_{L^2(\Gamma_1)}
\nonumber \\ &\leq& F^*(z^*)-\langle u_{ij}, (\hat{v}_1^*)_{ij}+(\hat{v}_2^*)_{ij}+(z^*)_{ij} \rangle_{L^2} \nonumber \\ &&+G(u)+\frac{K}{2}\int_\Omega \left(\frac{u_{i,j}+u_{j,i}}{2}\right)\left(\frac{u_{i,j}+u_{j,i}}{2}\right)  \;dx
\nonumber \\ &&+\langle u_{i,j}, (\hat{v}_1^*)_{ij}+(\hat{v}_2^*)_{ij} \rangle_{L^2}
-\langle u_i,f_i\rangle_{L^2(\Omega)}-\langle u_i,\hat{f}_i\rangle_{L^2(\Gamma_1)}
\nonumber \\ &=& F^*(z^*)-\langle u_{i,j},z^*_{ij} \rangle_{L^2} \nonumber \\ &&+G(u)+\frac{K}{2}\int_\Omega \left(\frac{u_{i,j}+u_{j,i}}{2}\right)\left(\frac{u_{i,j}+u_{j,i}}{2}\right)  \;dx
\nonumber \\ &&
-\langle u_i,f_i\rangle_{L^2(\Omega)}-\langle u_i,\hat{f}_i\rangle_{L^2(\Gamma_1)},
\end{eqnarray}
$\forall u \in U,\;z^* \in B_{r_1}(\hat{z}^*)$.

In particular, there exists $r>0$ such that if $u \in B_r(u_0)$ then $\{z^*_{ij}\}=K\left\{\frac{u_{i,j}+u_{j,i}}{2}\right\} \in B_{r_1}(\hat{z}^*)$, so that from this and (\ref{el50}), we obtain
\begin{eqnarray}\label{el70} J^*(\hat{v}^*,\hat{z}^*)&\leq&-\frac{K}{2}\int_\Omega \left(\frac{u_{i,j}+u_{j,i}}{2}\right)\left(\frac{u_{i,j}+u_{j,i}}{2}\right)  \;dx +G(u) \nonumber \\ && +\frac{K}{2}\int_\Omega \left(\frac{u_{i,j}+u_{j,i}}{2}\right)\left(\frac{u_{i,j}+u_{j,i}}{2}\right)  \;dx -\langle u_i,f_i\rangle_{L^2(\Omega)}-\langle u_i,\hat{f}_i\rangle_{L^2(\Gamma_1)}
\nonumber \\ &=& G(u)-\langle u_i,f_i\rangle_{L^2(\Omega)}-\langle u_i,\hat{f}_i\rangle_{L^2(\Gamma_1)} \nonumber \\ &=& J(u),\; \forall u \in B_r(u_0).\end{eqnarray}

Finally, from (\ref{el100}), (\ref{el80}), and  (\ref{el70}), we may infer that
\begin{eqnarray}
J(u_0)&=& \inf_{u \in B_r(u_0)} J(u) \nonumber \\ &=& \sup_{v^* \in B_{r_2}(\hat{v}^*)\cap A^*}\left\{\inf_{z^* \in B_{r_1}(\hat{z}^*)}\left\{J^*(v^*,z^*)
\right\}\right\} \nonumber \\ &=& J^*(\hat{v}^*,\hat{z}^*).
\end{eqnarray}

The proof is complete.
\end{proof}

\section{Conclusion} In this article we develop some theoretical  results on duality for a class of non-convex optimization problems in elasticity.
In this first approach we have developed in details  duality principles and sufficient optimality conditions for  local minimality
for  one and three-dimensional  models in elasticity. It is worth mentioning the results may be extended to other models in elasticity and to other models of plates and shells.

\end{document}